\newtheorem{lem}{Lemma}[section]
\newtheorem{thrm}[lem]{Theorem}
\newtheorem{prop}[lem]{Proposition}
\theoremstyle{definition}
\newtheorem{defn}[lem]{Definition}
\theoremstyle{remark}
\newcommand{\R}{\mathbb{R}}
\newcommand{\C}{\mathbb{C}}
\newcommand{\N}{\mathbb{N}}
\newcommand{\Schwartz}{\mathcal{S}}
\renewcommand{\Re}{\operatorname{Re}}
\renewcommand{\Im}{\operatorname{Im}}
\newcommand{\eps}{\varepsilon}
\newcommand{\mc}{\mathcal}
\newcommand{\mr}{\mathrm}
\DeclareMathOperator{\supp}{supp}
\DeclareMathOperator{\sign}{sign}
\newcommand{\<}{\langle}
\renewcommand{\>}{\rangle}
\newcommand{\p}{\partial}
\newcommand{\bbo}{\mathbbm 1}
\newcommand{\qtq}[1]{\quad\text{#1}\quad}
\newcommand{\qt}[1]{\quad\text{#1}}
\newcommand{\cM}{\mc M}
\newcommand{\loc}{\mr{loc}}
\newcommand{\Tmax}{T_\textit{max}}
\newcommand{\I}{I}  % For consistency -- distinguishes Roman numeral from time intervals etc
\newcommand{\II}{I\!I}
\newcommand{\III}{I\!I\!I}
\newcommand{\Err}{\textsl{Error}}
\newcounter{smalllist}
\newenvironment{SL}{\begin{list}{\hss\upshape(\roman{smalllist})\hss}{%
\setlength{\topsep}{0mm}\setlength{\parsep}{0mm}\setlength{\itemsep}{0mm}%
\setlength{\labelwidth}{1.75em}\setlength{\labelsep}{\the\fontdimen2\font}\setlength{\leftmargin}{\the\labelwidth}\addtolength{\leftmargin}{\labelsep}%
\setlength{\itemindent}{0em}\usecounter{smalllist}%
}}{\end{list}}
\title[Scattering for concentrated nonlinearities]{Scattering for the nonlinear Schr\"odinger equation with concentrated nonlinearity}
\author[B.~Harrop-Griffiths]{Benjamin Harrop-Griffiths}
\address{Department of Mathematics \& Statistics, Georgetown University\\Washington, DC 20057, USA}
\email{benjamin.harropgriffiths@georgetown.edu}
\thanks{}
\author[R.~Killip]{Rowan Killip}
\address{Department of Mathematics, University of California, Los Angeles\\ CA 90095, USA}
\email{killip@math.ucla.edu}
\thanks{}
\author[M.~Vi\c san]{Monica Vi\c san}
\address{Department of Mathematics, University of California, Los Angeles\\CA 90095, USA}
\email{visan@math.ucla.edu}
\thanks{}
\numberwithin{equation}{section}
\begin{document}

\begin{abstract}
We consider the cubic defocusing nonlinear Schr\"odinger equation in one dimension with the nonlinearity concentrated at a single point.  We prove global well-posedness in the scaling-critical space $L^2(\R)$ and scattering for all such solutions.  Moreover, we demonstrate that the same phenomenology holds whenever nonlinear effects are sufficiently concentrated in space.
\end{abstract}

\maketitle

\section{Introduction}

We consider the nonlinear Schr\"odinger equation
\begin{equation}\label{g NLS}\tag{NLS$_g$}
i\psi_t = - \psi_{xx} + g(x)|\psi|^2\psi
\end{equation}
in one space dimension.  This equation describes the time evolution of a `wave function' $\psi(t)\colon\R\to\C$.  Notice that we allow the coupling $g(x)$ of the nonlinearity to vary in space; indeed, we wish to discuss the case where $g(x)$ is strongly concentrated around a single point.  To this end, we fix $g\in L^1(\R)$ satisfying $\int_\R g(x) \,dx =1$ and then consider the rescaled coupling constants
\begin{equation}\label{g choice}
 g_\eps(x) := \tfrac1\eps g\bigl(\tfrac x\eps\bigr)
\end{equation}
with $\eps>0$ small.

Evidently, $\int_\R g_\eps =1$ for all $\eps>0$.  The fact that this integral is positive is crucial for our analysis; it gives the model a defocusing flavor.  Notice however that we do \emph{not} demand that $g\geq0$ pointwise.  We have chosen that $\int_\R g =1$ for simplicity; other values can be accommodated by a simple rescaling.

Sending $\eps\downarrow 0$, leads to the point-concentrated nonlinear Schr\"odinger equation,
\begin{equation}\label{point NLS}\tag{NLS$_{\delta}$}
i\psi_t = - \psi_{xx} + \delta(x)|\psi|^2\psi,
\end{equation}
in which the coupling coefficient has become the Dirac delta function.   A considerable theory has been developed around this equation in recent years; see the review article 
\cite{MR4653819}.  In particular, the seemingly naive $\eps\downarrow 0$ limit just described has been shown to hold rigorously on compact time intervals for sufficiently regular initial data \cite{MR3275343,MR2318828}; specifically, these papers demand initial data $\psi_0\in H^1(\R)$.

The problem we analyze in this paper combines three challenges: we wish to describe the \emph{long-time} behavior for \emph{large} initial data at \emph{critical} regularity.

Any intrinsic notion of size must naturally respect the scaling symmetry of \eqref{point NLS}, namely,
\begin{equation}\label{mass crit}
\psi(t,x)\mapsto \sqrt\lambda \psi(\lambda^2 t,\lambda x)\qt{for \(\lambda>0\)}.
\end{equation}
This scaling also leaves the mass functional
\begin{equation}\label{mass}
M[\psi] = \int_\R|\psi|^2\,dx
\end{equation}
invariant, thereby identifying this model as \emph{mass-critical}.  The mass is also conserved under the \eqref{point NLS} dynamics (we will verify this later).  In this way, we are directed toward the most natural (and most ambitious) class of initial data: $L^2(\R)$.

Local well-posedness of \eqref{point NLS} in $L^2(\R)$ was shown previously in \cite{MR4188177}; moreover, global well-posedness and scattering have been shown under suitable smallness assumptions in \cite{MR4188177,MR4548487}.  Our main result addresses arbitrarily large initial data:

\begin{thrm}\label{t:point GWP}
The equation \eqref{point NLS} is globally well-posed in $L^2(\R)$.  Moreover,
\begin{equation}\label{STB:1}
\int_\R |\psi(t,0)|^4\,dt \lesssim \| \psi(0) \|_{L^2}^4
\end{equation}
and every solution scatters, that is, there are asymptotic states \(\psi_\pm\in L^2(\R)\) so that
\begin{equation}\label{scat defn}
\| \psi(t) - e^{it\partial_x^2} \psi_\pm \|_{L^2(\R)} \to 0 \qtq{as} t \to \pm\infty.
\end{equation}
\end{thrm}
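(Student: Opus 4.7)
My first step would be to reduce \eqref{point NLS} to a closed scalar integral equation for the boundary trace $u(t) := \psi(t,0)$.  Since $\delta(x)|\psi|^2\psi = |u(t)|^2 u(t)\,\delta(x)$, the Duhamel formula together with the free Schr\"odinger kernel at the origin yields
\[
u(t) = [e^{it\partial_x^2}\psi_0](0) - i\int_0^t K(t-s)\,|u(s)|^2 u(s)\,ds, \qquad K(\tau) := \tfrac{1}{\sqrt{4\pi i\tau}}.
\]
The 1D Strichartz estimate restricted to $x=0$ gives $\|[e^{it\partial_x^2}\psi_0](0)\|_{L^4_t(\R)} \lesssim \|\psi_0\|_{L^2}$, and by Hardy--Littlewood--Sobolev the nonlinear Duhamel operator is bounded from $L^{4/3}_t$ to $L^4_t$; hence $L^4_t$ is the critical scattering norm and \eqref{STB:1} is equivalent to the global bound $\|u\|_{L^4_t(\R)} \lesssim \|\psi_0\|_{L^2}$.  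Combined with the local theory of \cite{MR4188177}, every solution exists on a maximal interval $I\ni 0$ and scatters on $I$ precisely when $\|u\|_{L^4_t(I)} <\infty$; small data are already handled, so the issue is large data.

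To obtain the large-data bound I would run the Kenig--Merle concentration-compactness / rigidity scheme.  Negating \eqref{STB:1} would produce a critical mass $M_*\in(0,\infty)$ and, via a linear profile decomposition for the trace Strichartz inequality together with a stability / perturbation theorem for the fractional-integral equation above, a minimal-mass solution $\psi^*$ whose $L^2$-orbit is precompact modulo the symmetries of \eqref{point NLS}: scaling $\psi\mapsto\lambda^{1/2}\psi(\lambda^2 t,\lambda x)$ and time translation.  Galilean boosts and spatial translation are \emph{not} symmetries---the coupling is anchored at $x=0$---so profiles that drift away from the origin behave asymptotically as free linear solutions and can be peeled off, while profiles concentrating at the origin carry the nonlinear dynamics.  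A standard no-waste upgrade would then give $\psi^*$ additional regularity and finite variance.

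The rigidity rests on the elementary identity $x^2\delta(x)=0$: the nonlinearity exerts no force on the second moment, so the variance $V(t):=\int_\R x^2|\psi^*(t,x)|^2\,dx$ obeys the \emph{free} Schr\"odinger variance identity
\[
\ddot V(t) = 16\int_\R |\psi^*_x(t,x)|^2\,dx.
\]
Almost-periodicity of $\psi^*$ modulo a scale $N(t)$ controls both sides; after excluding the degenerate scenarios $N(t)\to 0$ and $N(t)\to\infty$, one obtains a positive lower bound on $\ddot V$, forcing quadratic growth of $V$ that is incompatible with the precompactness of the orbit.  This contradiction yields \eqref{STB:1}; the asymptotic states $\psi_\pm$ in \eqref{scat defn} then follow from $|u|^2 u \in L^{4/3}_t$ via standard Duhamel convergence.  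I expect the principal obstacle to be the concentration-compactness step---constructing the profile decomposition in a setting without Galilean invariance, correctly classifying profiles that escape to spatial infinity as effectively linear, and proving nonlinear stability for the fractional-integral equation at critical $L^4_t$ regularity---rather than the rigidity, which is comparatively transparent thanks to the algebraic vanishing of the nonlinear contribution to the virial.
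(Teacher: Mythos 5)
Your proposal and the paper's proof diverge completely. The paper avoids the concentration-compactness machinery altogether: the engine is Proposition~\ref{p:identity}, a short algebraic inequality obtained from precisely the trace equation you write down for $u(t)=\psi(t,0)$. Writing $F=|\psi|^2\psi$, the quadratic Duhamel term is symmetrized in $s\leftrightarrow t$ and the special kernel relations $\overline{k(\tau,0)}=k(-\tau,0)=i\sign(\tau)k(\tau,0)$ reveal it to be sign-definite:
\begin{align*}
\Re\,\tfrac{i}{2}\int_0^T\!\!\int_0^T\overline{F(t,0)}\,\sign(t-s)\,k(t-s,0)\,F(s,0)\,ds\,dt
= \tfrac12\Bigl\|\int_0^T e^{-it\p_x^2}\bigl[F(t)\,\delta\bigr]\,dt\Bigr\|_{L^2}^2\geq 0.
\end{align*}
Discarding this nonnegative term and applying H\"older gives $\|u\|_{L^4_t}\leq\|e^{it\p_x^2}\psi_0\|_{L^4_tC^0_x}\lesssim\|\psi_0\|_{L^2}$ directly, for arbitrary data, with no induction on mass, no profile decomposition, no perturbation theory, and no rigidity. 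What the direct route buys is enormous: a proof of a few lines, where your scheme would require reproducing something on the scale of Dodson's mass-critical program in a setting (broken Galilean and translation invariance, a fractional integral equation rather than a semilinear PDE) where each step would need to be rebuilt from scratch.

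Beyond the size mismatch, the one step you describe as ``comparatively transparent'' contains an error. The nonlinear contribution to $\ddot V$ for $V=\int x^2|\psi|^2$ does \emph{not} vanish. For $i\psi_t=-\psi_{xx}+g|\psi|^2\psi$ one computes $\ddot V = 8\int|\psi_x|^2 + \int(2g-2xg')|\psi|^4$, and for $g=\delta$ the distributional identity $x\delta'(x)=-\delta(x)$ gives $\int(2\delta-2x\delta')|\psi|^4 = 4|\psi(0)|^4$, not zero. The heuristic $x^2\delta(x)=0$ concerns a weight hitting the potential term in the equation, but the virial flux brings in $g$ and $g'$, not $x^2g$. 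By good fortune the surviving term is positive in the defocusing case, so the qualitative contradiction $\ddot V\gtrsim 1$ against precompactness would still be available, but not for the reason you give. And to even write $\ddot V$ for an $L^2$ minimal-mass solution you would first need to establish additional regularity and spatial decay, which is precisely the part of the mass-critical theory that is hardest; none of the profile decomposition, stability, or variance-upgrade steps you flag as obstacles is addressed, and all of them are made unnecessary by the paper's direct estimate.
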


This result will be proven in Section~\ref{S:3} building on the local theory developed in Section~\ref{s:local}.  The crucial ingredient for treating large data globally in time is the new spacetime estimate provided by Proposition~\ref{p:identity}.  This estimate expresses the defocusing nature of our equation in a manner that is reminiscent of Morawetz-type identities.  However, the estimate differs from these well-known monotonicity formulas by expressing a direct inequality between the linear and nonlinear flows.  Moreover, the proof is also very different; it is not centered on a quantity that is monotone in time.

Earlier we motivated the consideration of the point-concentrated nonlinearity as the limit of \eqref{g NLS} with nonlinear coupling  $g_\eps(x)$ defined in \eqref{g choice}.  In Section~\ref{S:4} we return to this model and show that solutions of this model closely follow those of \eqref{point NLS} provided $\eps$ is sufficiently small, depending on the initial data.  (A simple scaling argument shows that we cannot choose $\eps$ independently of the initial data.)  The precise formulation of our result is as follows:

\begin{thrm}\label{t:perturbations}
Fix $g\in L^1(\R)$ with $\int_\R g =1$. Let $\psi^\eps(t)$ denote the solutions to
\begin{equation}\label{g epsilon NLS}\tag{NLS$_{g_\epsilon}$}
i\psi^\eps_t = - \psi^\eps_{xx} + g_\eps(x)|\psi^\eps|^2\psi^\eps,
\end{equation}
with initial data \(\psi^\eps(0)\to \psi(0)\) in \(L^2(\R)\).  There exists \(\eps_0>0\) so that for all \(0<\eps\leq \eps_0\), the equation \eqref{g epsilon NLS} is globally well-posed and scatters in \(L^2(\R)\). Moreover,
\begin{equation}\label{g converg}
\lim_{\eps \to 0}\ \sup_{t\in \R}\ \|\psi^\eps(t) - \psi(t)\|_{L^2} = 0,
\end{equation}
where $\psi$ denotes the solution to \eqref{point NLS} with initial data $\psi(0)$.
\end{thrm}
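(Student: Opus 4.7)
The plan is a critical-regularity stability argument that treats $\psi^\eps$ as an approximate solution of \eqref{point NLS}. Let $\psi$ denote the solution of \eqref{point NLS} with initial data $\psi(0)$; by Theorem~\ref{t:point GWP} it exists globally and satisfies the spacetime bound \eqref{STB:1}. Using this global bound, I would first partition $\R$ into finitely many subintervals $I_1,\dots,I_N$, with $N=N(\|\psi(0)\|_{L^2})$, on each of which the trace norm $\|\psi(\cdot,0)\|_{L^4(I_j)}$ is less than a small universal constant $\eta>0$.

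On each such interval, I would invoke a stability proposition for \eqref{point NLS} of the type I expect to be developed as part of the local theory in Section~\ref{s:local}: if $\tilde\psi$ solves \eqref{point NLS} up to a source term that is small in an appropriate dual Strichartz-type norm, and if $\tilde\psi(t_j)$ is close to $\psi(t_j)$ in $L^2$, then $\tilde\psi$ stays close to $\psi$ throughout $I_j$ in both $C_t L^2_x$ and in the critical trace norm. Taking $\tilde\psi=\psi^\eps$, the source discrepancy on $I_j$ is
\[
\mc E_\eps(t) = -i\int_{t_j}^t e^{i(t-s)\p_x^2}\bigl[(g_\eps(x) - \delta(x))|\psi^\eps|^2\psi^\eps\bigr](s)\,ds.
\]
Propagating closeness across the $N$ intervals in sequence then yields global existence of $\psi^\eps$ for every sufficiently small $\eps$ (depending on $\|\psi(0)\|_{L^2}$), together with the uniform convergence \eqref{g converg}; scattering of $\psi^\eps$ follows from the scattering of $\psi$ combined with the $L^2$-closeness.

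The main obstacle will be controlling $\mc E_\eps$ uniformly in $\eps$ at the critical $L^2$ regularity. Heuristically, $g_\eps$ concentrates at $x=0$ and $\int g_\eps(x)f(x)\,dx\to f(0)$ for $f$ continuous, so the discrepancy ought to be negligible once one knows that $\psi^\eps$ is continuous at $x=0$ with a modulus of continuity uniform in $\eps$. Producing such control at critical regularity should follow by combining \eqref{g epsilon NLS} with the local-smoothing and trace-type estimates at $x=0$ that are central to Section~\ref{s:local}: these yield $\psi^\eps(\cdot,0)\in L^4_t$ with bounds uniform in small $\eps$, by mirroring the contraction argument for \eqref{point NLS}, and a short bootstrap on the integral equation further gives H\"older-type continuity of $\psi^\eps$ in $x$ near $0$. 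Feeding this back into $\mc E_\eps$, quantified by the $\eps$-scale of $g_\eps$, should then show $\mc E_\eps\to 0$ in the relevant norm as $\eps\downarrow 0$, closing the stability loop.
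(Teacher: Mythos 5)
There is a genuine gap, and it lies precisely where you identify the main obstacle: controlling the discrepancy
\[
\mc E_\eps(t) = -i\int_{t_j}^t e^{i(t-s)\p_x^2}\bigl[(g_\eps - \delta)|\psi^\eps|^2\psi^\eps\bigr](s)\,ds.
\]
By phrasing $\psi^\eps$ as an approximate solution of \eqref{point NLS}, you have placed the \emph{unknown} function $|\psi^\eps|^2\psi^\eps$ inside the $(g_\eps-\delta)$ discrepancy. To show $\mc E_\eps\to 0$ you then need convergence properties of $\psi^\eps$ \emph{uniformly in $\eps$} (equicontinuity near $x=0$ plus a uniform $L^4_t$ trace bound), but those are exactly the conclusions the stability argument is trying to produce. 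The proposed rescue --- a ``short bootstrap'' yielding H\"older-type continuity of $\psi^\eps$ in $x$ near $0$ uniformly in $\eps$ --- is not available at critical $L^2$ regularity. Section~\ref{s:local} provides only the endpoint Strichartz estimate $\psi^\eps\in L^4_t C^0_x$, and $C^0_x$ carries no quantitative modulus of continuity: for generic $L^2$ data, $e^{it\p_x^2}\psi_0(x)$ is continuous in $x$ but with an arbitrary modulus. Moreover, formally differentiating the Duhamel term in $x$ produces $\p_x k(\tau,z)\sim|\tau|^{-1}$ near $\tau=0$, which is not integrable, so no Lipschitz or H\"older gain is forthcoming from a bootstrap on the integral equation. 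Without that uniform equicontinuity, a dominated convergence argument fails because the integrand $|\psi^\eps|^2\psi^\eps$ is itself $\eps$-dependent.

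The paper avoids the circularity by a different decomposition of $\psi^\eps-\psi$: the $(g_\eps-\delta)$ discrepancy is applied to the \emph{known, fixed} nonlinearity $|\psi|^2\psi$ (this is the object of Lemma~\ref{L:4.1}, which uses that $\psi$ is a single $L^4_t C_x$ function so dominated convergence and a fixed dominating function work), while the piece involving $\psi^\eps$ appears only through the difference $|\psi^\eps|^2\psi^\eps - |\psi|^2\psi$, tested against $g_\eps$ alone (not $g_\eps-\delta$) and closed by a bootstrap in the weighted norm $L^4_t L^4_{g_\eps}$. This rearrangement removes any need for $\eps$-uniform continuity of $\psi^\eps$. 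Two smaller points: the stability argument you envision gives closeness only in a Strichartz-type norm, and upgrading this to full $C_tL^2_x$ convergence \eqref{g converg} requires an additional ingredient (the paper uses mass conservation together with the frequency tightness of $\psi$ from Lemma~\ref{L:>N}); and scattering of $\psi^\eps$ is not a direct consequence of ``$L^2$-closeness to a scattering solution'' --- one needs the spacetime bound \eqref{scattering criterion} for $\psi^\eps$ itself, which your argument should supply but which needs to be stated.
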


\subsection*{Acknowledgements}B.~H.-G. was supported by NSF grant DMS-2406816,  R.~K. was supported by NSF grant DMS-2154022, and M.~V. was supported by NSF grants DMS-2054194 and DMS-2348018.

%%%%%%%%%%%%%%%%%%%%%%%%%%%%%%%%%%%%%%%%%%%%%%%%%%%%%%%
\section{Preliminaries and local theory}\label{s:local}
%%%%%%%%%%%%%%%%%%%%%%%%%%%%%%%%%%%%%%%%%%%%%%%%%%%%%%%

We start with notation.  Our convention for the inner product on $L^2(\R)$ is
\[
\langle g, f\rangle = \int_\R \overline{g(x)} f(x)\,dx.
\]

Given a Banach space \(X\) and an interval \(I\), we write \(C(I;X)\) for the space of bounded continuous functions $\psi:I\to X$ equipped with the supremum norm. The omission of $X$ indicates \(X = \C\); similarly, the omission of $I$ indicates $I=\R$.

We write \(\cM\) to denote the space of finite signed Borel measures on \(\R\), equipped with the total variation norm. We recall that \(\cM\) is dual to the closed subspace \(C^0(\R)\subseteq C(\R)\) of continuous functions vanishing at infinity.

If \(\mu\in \cM\) and \(1\leq p<\infty\), we take \(L_\mu^p\) to be the space of (equivalence classes of) Borel-measurable \(\psi\colon \R\to \C\) with finite norm
\[
\|\psi\|_{L_\mu^p}^p = \int_\R |\psi(x)|^p\,d|\mu|(x).
\]
For any \(1\leq p\leq \infty\), we have 
\begin{equation}\label{trivial/useful}
\|\psi\|_{L_\mu^p}\leq \|\mu\|_{\cM}^{\frac1p}\|\psi\|_C^{ }
\end{equation}
and so by duality,
\begin{equation}\label{trivial/useful dual}
\|h \mu\|_{\cM}\leq \|\mu\|_{\cM}^{\frac1p}\|h\|_{L_\mu^{p'}},
\end{equation}
where \(\frac1p + \frac1{p'} = 1\).

When using mixed spacetime norms, we will use subscripts to indicate the corresponding variable, e.g., \(C^{}_tL_x^2 = C(\R;L^2(\R))\). If \(I\subseteq \R\) is an interval, we use the subscript \(I\) to indicate that the time domain is \(I\) instead of \(\R\), e.g., \(C_IL_x^2 = C(I;L^2(\R))\).

Our first result is a small refinement of the endpoint Strichartz estimate, drawing attention to continuity and decay at infinity.  

\begin{lem}\label{l:linear}
For any $\psi_0\in L^2(\R)$ and any forcing term $f\in L_t^{\frac43}\cM$, 
\begin{equation}\label{linear Duhamel}
\psi(t) := e^{it\p_x^2}\psi_0 - i \int_0^t e^{i(t-s)\p_x^2}f(s)\,ds
\end{equation}
satisfies the  Strichartz bounds
\begin{equation}\label{linear Strichartz}
\|\psi\|_{C^{}_tL_x^2 \cap L_t^4C^0_x} \lesssim \|\psi_0\|_{L^2} + \|f\|_{L_t^{4/3}\cM} .
\end{equation}
\end{lem}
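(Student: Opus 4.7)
The plan is to establish the homogeneous endpoint Strichartz estimate first, then pass to the inhomogeneous estimate with measure forcing via a $TT^*$/duality argument, and finally use the Christ--Kiselev lemma to upgrade the non-retarded Duhamel integral to the retarded one written in \eqref{linear Duhamel}. The key structural fact is that $(4,\infty)$ is an admissible Strichartz pair in one space dimension, and that $\mathcal{M}$ is the predual of $C^0$, so that measures in space are precisely the dual input matched to a $C^0_x$-valued output.

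First I would prove that $\|e^{it\partial_x^2}\psi_0\|_{C_tL_x^2\cap L_t^4 C_x^0}\lesssim \|\psi_0\|_{L^2}$. The $C_tL_x^2$ bound is strong continuity of the free group on $L^2$, and the standard 1D endpoint Strichartz estimate gives $L^4_tL^\infty_x$. To refine $L^\infty_x$ to $C^0_x$, note that when $\psi_0\in\mathcal S$, the Fourier representation $\mathcal F^{-1}(e^{-it\xi^2}\widehat{\psi_0})$ is Schwartz in $x$ for each $t$ and in particular belongs to $C^0_x$; since $L^4_t C^0_x$ is a closed subspace of $L^4_t L^\infty_x$, density of $\mathcal S$ in $L^2$ extends the bound. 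For the inhomogeneous part, I then take the formal adjoint $T^*$ of $T:\psi_0\mapsto e^{it\partial_x^2}\psi_0$, viewed as $L^2\to L^4_tC^0_x$. By the duality $\mathcal M=(C^0)^*$, this $T^*$ is bounded $L_t^{4/3}\cM\to L^2$, and is given weakly by
\[
\langle T^* f,\phi\rangle_{L^2} \;=\; \int_\R \int_\R \overline{(e^{is\partial_x^2}\phi)(x)}\,df(s)(x)\,ds\qquad\text{for }\phi\in L^2.
\]
Composing, the non-retarded Duhamel operator $TT^* f(t)=\int_\R e^{i(t-s)\partial_x^2}f(s)\,ds$ satisfies the full bound \eqref{linear Strichartz}. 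The Christ--Kiselev lemma, applied with Banach-space-valued integrand and time indices $\tfrac43<4$, then promotes this to the retarded version appearing in \eqref{linear Duhamel}, yielding the $L^4_t C^0_x$ control. For the $C_tL^2_x$ control on the Duhamel term, I would write $\int_0^t e^{i(t-s)\partial_x^2}f(s)\,ds = e^{it\partial_x^2}\int_0^t e^{-is\partial_x^2}f(s)\,ds$, apply the bound on $T^*$ to the restriction $\chi_{[0,t]}f$ to get the $L^\infty_tL^2_x$ estimate, and obtain continuity in $t$ by density approximation of $f$ by simple functions in time, for which continuity is manifest.

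The main obstacle is giving precise meaning to $T^*$ when $f(s)$ is measure-valued, since for a generic $\mu\in\cM$ the object $e^{-is\partial_x^2}\mu$ is only a tempered distribution for fixed $s$ (already $\mu=\delta_0$ evolves into $(4\pi is)^{-1/2}e^{ix^2/(4s)}\notin L^2$). The fix is to avoid interpreting $T^*f$ pointwise in $s$ at all and instead define it weakly as above; boundedness then follows from boundedness of $T$ on the dense subset of Schwartz data, where every quantity is classical, after which the estimate is extended by density and duality. Once $T^*$ is in hand, the Christ--Kiselev step is a black-box application using only the Banach-space norms $\cM$ and $C^0$, so no further care with the measure structure is required.
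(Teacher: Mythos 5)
Your proposal is correct and follows essentially the same route as the paper: establish the homogeneous endpoint Strichartz bound with values in $C^0_x$ by a density argument, pass to the dual estimate $\|T^*f\|_{L^2}\lesssim\|f\|_{L_t^{4/3}\cM}$ using $\cM=(C^0)^*$, compose to obtain the non-retarded inhomogeneous bound, and upgrade to the retarded Duhamel integral via Christ--Kiselev (which applies since $\tfrac43<4$). The only cosmetic difference is that the paper obtains continuity and vanishing at infinity of $e^{it\partial_x^2}\psi_0$ through the Sobolev embedding $H^1\hookrightarrow C^0$ and density of $H^1$ in $L^2$, whereas you use density of $\mathcal S$ and closedness of $L^4_tC^0_x$ in $L^4_tL^\infty_x$; both are equivalent and standard.
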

\begin{proof}
The classical Strichartz estimate from \cite{MR741079} shows that
\begin{equation}\label{classical Strichartz}
\|e^{it\p_x^2}\psi_0\|_{C^{}_tL_x^2\cap L_t^4L_x^\infty}\lesssim  \|\psi_0\|_{L^2}.
\end{equation}

By virtue of the Sobolev embedding \(H^1(\R)\hookrightarrow C^0(\R)$, we have
\[
\|e^{it\p_x^2}\psi_0\|_{C^0_x}\lesssim \|e^{it\p_x^2}\psi_0\|_{H^1}\lesssim \|\psi_0\|_{H^1} \quad\text{for all $t\in \R$}.
\]
Thus,  for any \(\psi_0\in H^1\), we may upgrade \eqref{classical Strichartz} to
\begin{equation}\label{slightly better Strichartz}
\|e^{it\p_x^2}\psi_0\|_{C^{}_tL_x^2\cap L_t^4C^0_x}\lesssim \|\psi_0\|_{L^2}.
\end{equation}
The restriction that $\psi_0\in H^1$ is then easily removed by exploiting the density of \(H^1(\R)\) in \(L^2(\R)\).

Recalling that \(\cM\) is dual to $C^0$, we deduce the dual estimate
\begin{equation}\label{dual Strichartz}
\biggl\|\int_\R e^{-it\p_x^2}f(t)\,dt\biggr\|_{L^2} \lesssim  \|f\|_{L_t^{4/3}\cM_x}.
\end{equation}
The proof of \eqref{linear Strichartz} is now completed by combining \eqref{slightly better Strichartz} and \eqref{dual Strichartz} with the Christ--Kiselev lemma \cite{MR1809116}.
\end{proof}

We now turn to the problem of well-posedness. Here, we will treat the rather more general model
\begin{equation}\label{mu NLS}\tag{NLS$_\mu$}
i\psi_t = - \psi_{xx} + |\psi|^2\psi\,d\mu,
\end{equation}
where \(\mu\in \cM\).  We will only consider strong solutions to this equation:
 
\begin{defn}
Let \(I\subseteq \R\) be an interval. We say \(\psi\in C_IL_x^2\) is a \emph{solution} to \eqref{mu NLS} if \(\psi\in L_{I,\loc}^4C_x\) and for some \(t_0\in I\) and all \(t\in I\) we have
\begin{equation}\label{Duhamel}
\psi(t) = e^{i(t-t_0)\p_x^2}\psi(t_0) - i\int_{t_0}^t e^{i(t-s)\p_x^2}\bigl[|\psi(s)|^2\psi(s)\,d\mu\bigr]\,ds.
\end{equation}
\end{defn}

Notice that the estimate \eqref{linear Strichartz} ensures that \eqref{Duhamel} makes sense for \(\psi\in C_IL_x^2\cap L_{I,\loc}^4C_x\). Moreover, a straightforward computation shows that solutions of \eqref{mu NLS} satisfy \eqref{Duhamel} for \emph{all} choices of \(t_0\in I\).

One may also easily verify the following time translation and reversal symmetries of \eqref{mu NLS}: If $\psi(t)$ is a solution to \eqref{mu NLS}, then so too are \(t\mapsto\psi(t+t_0)\) and \(t \mapsto \overline{\psi(-t)}\).  These symmetries allow us to focus our development of the local well-posedness theory to time intervals of the form $[0,T)$.

\begin{thrm}[Local theory for \eqref{mu NLS}]\label{t:LWP}\leavevmode
\begin{SL}
\item\label{LWP:E} For any $\psi_0\in L^2(\R)$, there is a $T>0$ and a solution $\psi:[0,T)\to L^2(\R)$ to \eqref{mu NLS} with $\psi(0)=\psi_0$.
\item\label{LWP:U} If $\psi,\phi$ are solutions to \eqref{mu NLS} that agree at one time, then they agree throughout their common domain.  
\item\label{LWP:SC} The maximal existence interval (forward in time) has the form $[0,\Tmax(\psi_0))$.  If the maximal solution satisfies
\begin{equation}\label{scattering criterion}
\int_0^{\Tmax} \int_\R |\psi(t,x)|^4\,d|\mu|(x)\,dt < \infty,
\end{equation}
then $\Tmax=\infty$ and the solution scatters forward in time {\upshape(}cf. \eqref{scat defn}{\upshape)}.
\item\label{LWP:SD} There exists $\eta_0>0$, depending only on $\mu$, so that \(\|\psi_0\|_{L^2}\leq \eta_0\) implies \eqref{scattering criterion}.
\item\label{LWP:CD}  If $\psi_{n,0}\to\psi_0$ in $L^2(\R)$, then the corresponding solutions $\psi_n(t)$ converge to $\psi(t)$ in $C^{}_tL^2\cap L^4_t C^{0}_x$ on any time interval $[0,T]$ with $T<\Tmax(\psi_0)$.  In particular, the mapping $\psi_0\to \Tmax(\psi_0)$ is lower semicontinuous. 
\item\label{LWP:MC} Mass is conserved under the flow: $M[\psi(t)]=M[\psi_0];$ see \eqref{mass}.
\end{SL}
\end{thrm}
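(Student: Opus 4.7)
The argument is built on two nonlinear Strichartz bounds obtained by combining Lemma~\ref{l:linear} with \eqref{trivial/useful dual}: setting $F := |\psi|^2\psi\,d\mu$, the choices $p'=\infty$ and $p'=4/3$ yield
\[
\|F\|_{L^{4/3}_t\mathcal{M}} \leq \|\mu\|_{\mathcal{M}}\|\psi\|_{L^4_t C^0_x}^3 \quad\text{and}\quad \|F\|_{L^{4/3}_t\mathcal{M}} \leq \|\mu\|_{\mathcal{M}}^{1/4}\|\psi\|_{L^4_t L^4_\mu}^3,
\]
together with difference analogues from $||a|^2a - |b|^2 b| \lesssim (|a|^2+|b|^2)|a-b|$. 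For parts (i), (ii), (iv), I would run a classical Banach fixed point for the Duhamel map on a ball in $L^4_{[0,T]}C^0_x$. Existence (i): dominated convergence gives $\|e^{it\p_x^2}\psi_0\|_{L^4_{[0,T]}C^0_x}\to 0$ as $T\to 0^+$, so shrink $T$ (depending on $\psi_0$ and $\|\mu\|_{\mathcal{M}}$) to close the cubic contraction. Uniqueness (ii): apply the difference estimate over a short interval, then iterate. Small data (iv): choose $\eta_0$ small enough so that the contraction runs on all of $\R$ using $\|e^{it\p_x^2}\psi_0\|_{L^4_t C^0_x(\R)}\lesssim\|\psi_0\|_{L^2}$, producing a global solution whose finite $L^4_tC^0_x$ norm bounds $\|\psi\|_{L^4_tL^4_\mu}$ via \eqref{trivial/useful}.

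\textbf{Parts (iii) and (v).} Both are partition-and-propagate bootstraps. For (iii), if $\|\psi\|_{L^4_tL^4_\mu([0,\Tmax))}$ is finite, split $[0,\Tmax)$ into finitely many subintervals on which this norm lies below a threshold $\eta_1=\eta_1(\|\mu\|_{\mathcal{M}})$; the second nonlinear bound above then propagates $L^2\cap L^4_tC^0_x$ control across all of them. Hence $\psi(t)$ is Cauchy in $L^2$ as $t\uparrow\Tmax$, and re-applying (i) at the limit contradicts maximality unless $\Tmax=\infty$; scattering reduces to convergence of $e^{-it\p_x^2}\psi(t)$ in $L^2$, which follows from Lemma~\ref{l:linear} applied to the Duhamel tail. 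For (v), $T<\Tmax(\psi_0)$ gives $\psi\in L^4_{[0,T]}C^0_x$ by the definition of solution; partition $[0,T]$ into subintervals where $\|\psi\|_{L^4C^0_x}$ is small and apply the difference estimate to $w_n:=\psi_n-\psi$. A standard continuity-in-endpoint bootstrap on each piece absorbs the cubic-in-$w_n$ term and simultaneously shows that $\Tmax(\psi_{n,0})$ exceeds the length of the piece for $n$ large, yielding $\|w_n\|_{L^4_{I_j}C^0_x\cap C_{I_j}L^2}\lesssim\|w_n(t_j)\|_{L^2}$; iterating across $[0,T]$ gives the claim, and lower semicontinuity of $\Tmax$ is immediate.

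\textbf{Part (vi): the main obstacle.} Since we lack a usable $H^1$-persistence theory for \eqref{mu NLS} when $\mu$ is singular, the naive ``multiply by $\bar\psi$ and integrate'' is not directly available. My plan is to compute $\|\psi(t+h)\|^2-\|\psi(t)\|^2$ directly from \eqref{Duhamel}: writing $\psi(t+h)=e^{ih\p_x^2}\psi(t)-iG_h$ with $G_h:=\int_t^{t+h}e^{i(t+h-s)\p_x^2}F(s)\,ds$, unitarity of $e^{ih\p_x^2}$ yields
\[
\|\psi(t+h)\|^2-\|\psi(t)\|^2 = 2\Im\langle e^{ih\p_x^2}\psi(t),G_h\rangle + \|G_h\|^2.
\]
Expanding $e^{i(s-t)\p_x^2}\psi(t)$ by Duhamel inside the first term, the integrand splits as $\langle\psi(s),F(s)\rangle$ plus a second double-integral contribution over $\{t\le r<s\le t+h\}$. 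The former has vanishing imaginary part because $\langle\psi(s),F(s)\rangle=\int|\psi|^4\,d\mu\in\R$, while the latter cancels exactly with $\|G_h\|^2$ once one symmetrizes the double integral defining $\|G_h\|^2$ in $(r,s)$. The cross pairings $\langle e^{i(s-r)\p_x^2}F(r),F(s)\rangle$ for $r\neq s$ are meaningful because $e^{it\p_x^2}\nu\in C^0_x$ for any $\nu\in\mathcal{M}$ and any $t\neq 0$, with the dispersive bound $\|e^{it\p_x^2}\nu\|_{L^\infty}\lesssim|t|^{-1/2}\|\nu\|_{\mathcal{M}}$; the interchange of integration orders is then justified by Hardy--Littlewood--Sobolev together with $F\in L^{4/3}_t\mathcal{M}$.
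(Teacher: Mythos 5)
Your proposal is correct and tracks the paper's own argument quite closely: a cubic contraction for the Duhamel map using the Strichartz estimate of Lemma~\ref{l:linear} and the dual bound \eqref{trivial/useful dual}, a partition-and-propagate bootstrap for uniqueness and continuous dependence, and an algebraic cancellation for mass conservation that avoids any recourse to $H^1$ regularity. The small variations — running the fixed point in $L^4_t C^0_x$ rather than the paper's $L^4_t L^4_\mu$, phrasing part (iii) with an (unnecessary) subinterval decomposition instead of estimating the Duhamel tail directly over $[T,T_{\max})$, and proving mass conservation through local increments $\|\psi(t+h)\|^2 - \|\psi(t)\|^2$ rather than directly expanding $\|\psi(t)\|^2$ in terms of $\psi_0$ — are cosmetic and do not change the substance; in particular, your increment computation for (vi) symmetrizes the double integral in exactly the same way the paper does, and the dispersive bound $\|e^{it\partial_x^2}\nu\|_{L^\infty}\lesssim |t|^{-1/2}\|\nu\|_{\mathcal M}$ combined with Hardy--Littlewood--Sobolev is an acceptable substitute for the paper's appeal to the dual Strichartz estimate \eqref{dual Strichartz} to justify the manipulations.
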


\begin{proof}[Proof of {\upshape\ref{LWP:E}: Existence.}]%%%%%%%%%%%%%%%%%%%%%%%%%%%%%%%%%%%%%%%%%%%%%%%%%%%%%%%%%%%%%%%%%%%%%%%%%%%%%%%%
Given \(\eta>0\) and \(\psi_0\in L^2(\R)\), we may combine \eqref{linear Strichartz} with monotone convergence to find \(T = T(\eta,\psi_0)>0\) so that
\begin{equation}\label{small init condition}
\|e^{it\p_x^2}\psi_0\|_{L_{[0,T)}^4C_x}\leq \eta.
\end{equation}
We then define the closed ball
\[
B = \bigl\{\psi\in L_{[0,T)}^4L_\mu^4:\|\psi\|_{L_{[0,T)}^4L_\mu^4}\leq 2\eta\|\mu\|_{\cM}^{\frac14}\bigr\},
\]
and consider the mapping
\[
\Phi[\psi](t) = e^{it\p_x^2}\psi_0 - i\int_0^t e^{i(t-s)\p_x^2}\bigl[|\psi(s)|^2\psi(s)\,d\mu\bigr]\,ds.
\]

Applying \eqref{trivial/useful}, \eqref{trivial/useful dual}, and \eqref{linear Strichartz}, for \(\psi\in B\) we may estimate
\begin{align*}
\|\Phi[\psi]\|_{L_{[0,T)}^4L_\mu^4} &\leq \|\mu\|_{\cM}^{\frac14}\|\Phi[\psi]\|_{L_{[0,T)}^4C_x}\\
&\leq \|\mu\|_{\cM}^{\frac14}\Bigl[\|e^{it\p_x^2}\psi_0\|_{L_{[0,T)}^4C_x} + A\||\psi|^2\psi\mu\|_{L_{[0,T)}^{4/3}\cM}\Bigr]\\
&\leq \|\mu\|_{\cM}^{\frac14}\Bigl[\eta + A\|\mu\|_{\cM}^{\frac14}\|\psi\|_{L_{[0,T)}^4L_\mu^4}^3\Bigr]\\
&\leq \|\mu\|_{\cM}^{\frac14}\Bigl[\eta + 8A\eta^3\|\mu\|_{\cM}\Bigr],
\end{align*}
for some absolute constant \(A>0\) dictated by the Strichartz inequality \eqref{linear Strichartz}. Similarly, for \(\psi,\phi\in B\) we may bound the difference
\[
\|\Phi[\psi] - \Phi[\phi]\|_{L_{[0,T)}^4L_\mu^4}\leq 24A\eta^2\|\mu\|_{\cM}\|\psi - \phi\|_{L_{[0,T)}^4L_\mu^4}.
\]

For \(\eta>0\) sufficiently small, depending only on \(\|\mu\|_{\cM}\), these estimates show that \(\Psi\colon B\to B\) and that this mapping is a contraction.  Thus, there is a unique \(\psi\in B\) satisfying \(\psi = \Phi[\psi]\).  Applying \eqref{trivial/useful dual} and \eqref{linear Strichartz} once again, we see that
\begin{align}\label{back out bound}
\|\psi\|_{C_{[0,T)}L_x^2\cap L_{[0,T)}^4C_x}&\lesssim \|\psi_0\|_{L^2} + \||\psi|^2\psi\mu\|_{L_{[0,T)}^{4/3}\cM}\\&\lesssim \|\psi_0\|_{L^2} + \|\mu\|_{\cM}^{\frac14}\|\psi\|_{L_{[0,T)}^4L_\mu^4}^3 <\infty,\notag
\end{align}
and so \(\psi\in C_{[0,T)}L_x^2\cap L_{[0,T)}^4C_x\) is a solution to \eqref{mu NLS} satisfying \(\psi(0) = \psi_0\).
\end{proof}

\begin{proof}[Proof of {\upshape\ref{LWP:U}: Uniqueness.}]%%%%%%%%%%%%%%%%%%%%%%%%%%%%%%%%%%%%%%%%%%%%%%%%%%%%%%%%%%%%%%%%%%%%%%%%%%%%%%%%%%%%%%%%%%%%%%%%%%%%%%%%%%
Suppose for a contradiction that two solutions $\psi$ and $\phi$ agree at some time $t_0$, but disagree at another time $t_1>t_0$.  As both solutions are $L^2_x$-continuous and belong to \(L_{\loc}^4C_x\), we may choose such a pair of times so close together that for any prescribed $\eta>0$,
\[
\|\psi\|_{L_I^4L_\mu^4} + \|\phi\|_{L_I^4L_\mu^4}\leq \eta
\]
on the interval \(I = [t_0,t_1]\).  Arguing as in our proof of existence, we may then apply the estimates \eqref{trivial/useful}, \eqref{trivial/useful dual}, and \eqref{linear Strichartz} to the Duhamel formula \eqref{Duhamel} to bound
\[
\|\psi - \phi\|_{C_IL^2_x\cap L_I^4L_\mu^4}\leq 3 A \eta^2\|\mu\|_{\cM}\|\psi - \phi\|_{L_I^4L_\mu^4},
\]
where the constant \(A\) is chosen as before. When \(\eta\) is taken to be sufficiently small, depending only on \(\|\mu\|_{\cM}\), we deduce the desired contradiction.
\end{proof}

\begin{proof}[Proof of {\upshape\ref{LWP:SC}: Blowup/scattering criterion}]%%%%%%%%%%%%%%%%%%%%%%%%%%%%%%%%%%%%%%%%%%%%%%%%%%%%%%%%%%%%%%%%%%%%%%%%%%%%%%
If $\Tmax$ were contained in the interval of existence, then we could apply our existence theorem to extend the solution beyond this time; this would contradict the maximality of the solution.

Our next goal is to show that if \eqref{scattering criterion} holds, then   $e^{-it\p_x^2}\psi(t)$ is $L^2$-Cauchy as \(t\uparrow \Tmax\).  When \(\Tmax=\infty\), this immediately yields scattering.  If instead \(\Tmax<\infty\), then we could extend $\psi(t)$ continuously to $t=\Tmax$, contradicting the first paragraph of this proof.

For any pair of times $t_0,t_1\in[T,\Tmax)$, we may apply \eqref{linear Strichartz} to the identity \eqref{Duhamel} to obtain
\begin{align*}
\|e^{-it_1\p_x^2}\psi(t_1) - e^{-it_0\p_x^2}\psi(t_0)\|_{L_x^2} 
&\lesssim \bigl\||\psi|^2\psi\,d\mu\bigr\|_{L_{[T,\Tmax)}^{4/3}\cM}\lesssim \|\mu\|_{\cM}^{\frac14}\|\psi\|_{L_{[T,\Tmax)}^4L^4_\mu}^3.
\end{align*}
From \eqref{scattering criterion} and dominated convergence, we deduce that $e^{-it\p_x^2}\psi(t)$ is indeed $L^2$-Cauchy as \(t\uparrow \Tmax\).
\end{proof}

\begin{proof}[Proof of {\upshape\ref{LWP:SD}: Small-data global bounds.}]%%%%%%%%%%%%%%%%%%%%%%%%%%%%%%%%%%%%%%%%%%%%%%%%%%%%%%%%%%%%%%%%%%%%%%%%%%%%%%%%%%%%%%%%%%%%%%%
By applying \eqref{linear Strichartz}, we see that if \(\|\psi_0\|_{L^2}\) is sufficiently small then \eqref{small init condition} is satisfied with \(T = \infty\).  Thus, the proof of part (i) gives global existence ($\Tmax=\infty$) and \eqref{scattering criterion} for all sufficiently small initial data.
\end{proof}

\begin{proof}[Proof of {\upshape\ref{LWP:CD}: Continuous dependence.}]%%%%%%%%%%%%%%%%%%%%%%%%%%%%%%%%%%%%%%%%%%%%%%%%%%%%%%%%%%%%%%%%%%%%%%%%%%%%%%%%%%%%%%%%%%%%%%%%
Let $\psi:[0,\Tmax)\to L^2(\R)$ be the maximal forward solution with initial data $\psi_0$.  Our first goal is the following: given $T< \Tmax$, we wish to exhibit $\delta>0$ so that any initial data with
\begin{equation}\label{cont init}
\|\varphi_0 - \psi_0\|_{L^2}\leq \delta
\end{equation}
leads to a solution $\varphi(t)$ throughout the interval $[0,T]$.  In the process, we will obtain a quantitative bound on
\begin{equation}\label{err}
\Err(t):= \|\psi - \varphi\|_{C_{[0,t]}L_x^2\cap L_{[0,t]}^4C_x}
\end{equation}
that allows us to conclude that $\Err(T)\to 0$ as $\varphi_0 \to \psi_0$.

The choice of $T$ ensures that $\psi$ has finite $L^4_tC_x^{}$ norm on $[0,T]$.  Thus, given any $\eta>0$, we may partition this interval into 
\[
J_m := [t_{m-1},t_m] \qtq{with} 0 = t_0 < t_1 < \dots < t_M = T,
\]
in such a way that 
\begin{equation}\label{wee J}
\|\psi\|_{L_{J_m}^4C_x}\leq \eta \qt{for all $m=1,\ldots,M$}.
\end{equation}
Working efficiently, the number of intervals $M=M(\eta)$ can be ensured to satisfy
\begin{equation}\label{N size}
M\leq \tfrac1{\eta^4}\|\psi\|_{L_{[0,T]}^4C_x}^4 + 1.
\end{equation}

We also define a `stopping time' $\tau\in[0,T]$ as the maximal such $\tau$ so that
\begin{equation}\label{boot}
\|\psi - \varphi\|_{L_{[0,\tau]}^4C_x}\leq 2\eta.
\end{equation}
Evidently, $\tau$ depends on $\varphi_0$ and $\eta$.  Part\ref{LWP:SC} shows that the solution $\varphi$ cannot blow up before time $\tau$.  If $\tau\geq t_{m-1}$, then we write $J_m^\tau=[t_{m-1},t_m\wedge\tau]$.

Applying \eqref{linear Strichartz} in \eqref{Duhamel} and using \eqref{wee J} and \eqref{boot}, we conclude that
\[
\|\psi - \varphi\|_{C_{J_m^\tau}L_x^2\cap L_{J_m^\tau}^4C_x}\lesssim \|\psi(t_{m-1}) - \varphi(t_{m-1})\|_{L^2} + \eta^2\|\mu\|_{\cM}\|\psi - \varphi\|_{L_{J_m^\tau}^4C_x},
\]
for each \(m=1,\dots, M\) with $\tau\geq t_{m-1}$.   Thus if \(\eta>0\) is sufficiently small, depending only on \(\|\mu\|_{\cM}\), we have
\[
\Err(t_m\wedge\tau) \leq e^A \Err(t_{m-1}),
\]
for some constant \(A>0\) (that may be different from before). Applying this estimate iteratively for \(m=1,\dots,M\), we may use \eqref{N size} to conclude that
\begin{equation}\label{strap}
\Err(T\wedge\tau) \leq \exp\Bigl(A\Bigl[\tfrac1{\eta^4}\|\psi\|_{L_{[0,T]}^4C_x}^4+1\Bigr]\Bigr)\|\psi_0 - \varphi_0\|_{L^2}.
\end{equation}

Now if \(0<\delta\ll1\) is chosen so small that RHS\eqref{strap} is less than $2\eta$, then a continuity argument yields that $\tau=T$ and \eqref{boot} and holds throughout $[0,T]$.  In this way, we deduce both the existence of the solution $\varphi$ and the Lipschitz bound
\begin{equation}\label{strap cons}
\|\psi - \varphi\|_{C_{[0,T]}L_x^2\cap L_{[0,T]}^4C_x}\lesssim \exp\Bigl(A\Bigl[\tfrac1{\eta^4}\|\psi\|_{L_{[0,T]}^4C_x}^4+1\Bigr]\Bigr)\|\psi_0 - \varphi_0\|_{L^2}.
\end{equation}
valid for any initial data $\varphi_0$ satisfying \eqref{cont init}.
\end{proof}

\begin{proof}[Proof of {\upshape\ref{LWP:MC}: Mass conservation}]%%%%%%%%%%%%%%%%%%%%%%%%%%%%%%%%%%%%%%%%%%%%%%%%%%%%%%%%%%%%%%%%%%%%%%%%%%%%%%%%%%%%%%%%%%%%%%%%%%%%%%%%%%
As \(e^{it\p_x^2}\) is unitary, we infer from \eqref{Duhamel} that
\begin{align*}
\|\psi(t)\|_{L^2}^2 &= \biggl\|\psi_0 - i\int_0^t e^{-is\p_x^2}\bigl[\psi(s)|^2\psi(s)\,d\mu\bigr]\,ds\biggr\|_{L^2}^2\\
&= \|\psi_0\|_{L^2}^2 + 2\Im \biggl\<\psi_0,\int_0^te^{-is\p_x^2}\bigl[|\psi(s)|^2\psi(s)\,d\mu\bigr]\,ds\biggr\>\\
&\quad + \biggl\|\int_0^te^{-is\p_x^2}\bigl[|\psi(s)|^2\psi(s)\,d\mu\bigr]\,ds\biggr\|_{L^2}^2.
\end{align*}
Employing the dual Strichartz estimate \eqref{dual Strichartz} to justify the inner products, we may write the second and third terms as follows:
\begin{align*}
&2\Im \biggl\<\psi_0,\int_0^te^{-is\p_x^2}\bigl[|\psi(s)|^2\psi(s)\,d\mu\bigr]\,ds\biggr\> = 2\Im \int_0^t \bigl\<e^{is\p_x^2}\psi_0,|\psi(s)|^2\psi(s)\,d\mu\bigr\>\,ds,\\
&\biggl\|\int_0^te^{-is\p_x^2}\bigl[|\psi(s)|^2\psi(s)\,d\mu\bigr]\,ds\biggr\|_{L^2}^2 \\
&\qquad= 2\Re\int_0^t\biggl\<  \int_0^s e^{i(s-\sigma)\p_x^2}\bigl[|\psi(\sigma)|^2\psi(\sigma)\,d\mu\bigr] \,d\sigma,|\psi(s)|^2\psi(s)\,d\mu\biggr\>\,ds.
\end{align*}
Combining these and recognizing \eqref{Duhamel}, we get
\[
\|\psi(t)\|_{L^2}^2 = \|\psi_0\|_{L^2}^2 + 2\Im \int_0^t \bigl\<\psi(s),|\psi(s)|^2\psi(s)\,d\mu\bigr\>\,ds = \|\psi_0\|_{L^2}^2,
\]
which is precisely the conservation of mass.
\end{proof}

%%%%%%%%%%%%%%%%%%%%%%%%%%%%%%%%%%%%%%%%%%%%%%%%%%%%%%%
\section{Global solutions and scattering for a point nonlinearity}\label{S:3}
%%%%%%%%%%%%%%%%%%%%%%%%%%%%%%%%%%%%%%%%%%%%%%%%%%%%%%%

For $t\neq 0$, the fundamental solution of the linear Schr\"odinger equation is
\begin{equation}\label{fundamental solution}
k(t,x) := (e^{it\p_x^2}\delta)(x) = \tfrac1{\sqrt{4\pi |t|}}e^{-\frac{|x|^2}{4it} - \frac{i\pi}4\sign(t)} .
\end{equation}
Two properties of this kernel that play an essential role in our analysis are
\begin{equation}\label{key}
\overline{ k(t,0)} = k(-t,0) =  i\sign(t) k(t,0)  \qt{for all \(t\neq 0\).}
\end{equation}

\begin{prop}\label{p:identity}
If \(I\subseteq \R\) is an open interval containing zero and \(\psi\in C_IL_x^2\cap L_{I,\loc}^4C_x\) is the strong solution of \eqref{point NLS} satisfying \(\psi(0) = \psi_0\), then
\begin{equation}\label{estimate}
\int_I |\psi(t,0)|^4\,dt \leq \|e^{it\p_x^2}\psi_0\|_{L_I^4C_x}^4.
\end{equation}
\end{prop}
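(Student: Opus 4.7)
My plan is to evaluate the Duhamel formula at $x=0$, pair against the nonlinearity, and extract the sign of the resulting self-interaction integral using the algebraic identities \eqref{key} together with the non-negativity of a squared $L^2$ norm. By the time-reversal symmetry $\psi(t,x)\mapsto\overline{\psi(-t,x)}$ noted in Section~\ref{s:local}, together with the identity $\overline{e^{it\p_x^2}\psi_0}=e^{-it\p_x^2}\overline{\psi_0}$ (which preserves the right-hand side of \eqref{estimate} under $t\mapsto -t$), it suffices to prove \eqref{estimate} on intervals of the form $I=[0,T]$.

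Set $u(t):=\psi(t,0)$, $v(t):=(e^{it\p_x^2}\psi_0)(0)$, and $F(t):=|u(t)|^2u(t)$. Evaluating \eqref{Duhamel} at $x=0$ yields $u(t)=v(t)-i\int_0^t k(t-s,0)F(s)\,ds$. Multiplying by $\overline{F(t)}$ (so that $\overline{F(t)}u(t)=|u(t)|^4$) and integrating over $[0,T]$ gives
\[
\int_0^T|u|^4\,dt = \int_0^T\overline{F(t)}v(t)\,dt + J, \qquad J:=-i\iint_{0\leq s\leq t\leq T} k(t-s,0)\,\overline{F(t)}F(s)\,ds\,dt.
\]

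The heart of the argument is the claim $\Re J\leq 0$. I will compute $2\Re J=J+\overline{J}$; upon relabeling the variables of integration in $\overline{J}$ and invoking the identity $\overline{k(\tau,0)}=k(-\tau,0)=i\sign(\tau)k(\tau,0)$ from \eqref{key}, the triangles $\{s\leq t\}$ and $\{t\leq s\}$ combine into a single integral over $[0,T]^2$. A direct check shows that the combined kernel equals $-\overline{k(t-s,0)}$ on both triangles, so
\[
2\Re J = -\iint_{[0,T]^2}\overline{k(t-s,0)}\,\overline{F(t)}F(s)\,ds\,dt.
\]
Taking a complex conjugate of the right-hand side (which is real) shows that $2\Re J=-A$, where
$A:=\iint_{[0,T]^2} k(t-s,0)\,F(t)\,\overline{F(s)}\,ds\,dt.$
The final ingredient is to recognize $A$ as a squared $L^2$ norm. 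Define $\Psi:=\int_0^T e^{-is\p_x^2}[F(s)\,\delta]\,ds$, which lies in $L^2(\R)$ by the dual Strichartz bound \eqref{dual Strichartz}. Plancherel gives $\hat\Psi(\xi)=\int_0^T F(s)e^{is\xi^2}\,ds$, and expanding $\|\Psi\|_{L^2}^2=\tfrac{1}{2\pi}\|\hat\Psi\|_{L^2}^2$ together with the kernel identity $\int e^{i\tau\xi^2}\,d\xi/(2\pi)=k(-\tau,0)$ recovers $A=\|\Psi\|_{L^2}^2\geq 0$. Hence $\Re J\leq 0$.

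Combining this with H\"older's inequality (exponents $4/3$ and $4$),
\[
\int_0^T |u|^4\,dt \leq \Re\int_0^T\overline{F(t)}v(t)\,dt \leq \int_0^T |u|^3|v|\,dt \leq \|u\|_{L^4}^3\|v\|_{L^4},
\]
so $\|u\|_{L^4}^4\leq\|v\|_{L^4}^4\leq\|e^{it\p_x^2}\psi_0\|_{L^4_tC_x}^4$, giving \eqref{estimate}. I expect the principal obstacle to be the bookkeeping in the symmetrization step that identifies $2\Re J=-A$: one must carefully track how the Duhamel factor $-i$ interacts with the phase in $\overline{k(\tau,0)}=i\sign(\tau)k(\tau,0)$ across both halves of the integration domain. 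Once this identification is secured, the positivity of $A$ comes essentially for free from the unitarity of $e^{it\p_x^2}$.
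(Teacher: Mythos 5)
Your proof is correct and follows the paper's argument essentially verbatim: evaluate Duhamel at $x=0$, pair against $\overline{F}$, symmetrize the resulting double integral over $[0,T]^2$ using the kernel identities \eqref{key}, and recognize the symmetrized integral as $\bigl\|\int_0^T e^{-is\p_x^2}[F(s)\,\delta]\,ds\bigr\|_{L^2}^2\geq 0$ before concluding with H\"older. The only cosmetic difference is that you verify the identity $A=\|\Psi\|_{L^2}^2$ via Plancherel and the oscillatory integral $\int e^{i\tau\xi^2}\,d\xi$ (which is only conditionally convergent and really wants a short regularization remark), whereas the paper expands the $L^2$ inner product directly using $\langle e^{-is\p_x^2}\delta,e^{-it\p_x^2}\delta\rangle=k(s-t,0)$; the latter is cleaner, but both amount to the same computation.
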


\begin{proof}
We abbreviate the nonlinearity as
\begin{equation}\label{F(t,x)}
F(t,x) = |\psi(t,x)|^2\psi(t,x).
\end{equation}
For any \(0<T\in I\), we employ \eqref{key} to obtain
\begin{align*}
&\int_0^T|\psi(t,0)|^4\,dt\\
&\qquad = \Re\int_0^T\overline{F(t,0)} \psi(t,0)\,dt\\
&\qquad = \Re\int_0^T \overline{F(t,0)}\bigl(e^{it\p_x^2}\psi_0\bigr)(0)\,dt  - \Re i\int_0^T\int_0^t \overline{F(t,0)}k(t-s,0)F(s,0)\,ds\,dt\\
&\qquad = \Re\int_0^T \overline{F(t,0)}\bigl(e^{it\p_x^2}\psi_0\bigr)(0)\,dt\\
&\qquad\quad - \tfrac12\Re \int_0^T\int_0^T \overline{F(t,0)} \, i\sign(t-s)k(t-s,0)F(s,0)\,ds\,dt
\end{align*}
The last step here is symmetrization of the double integral under $s\leftrightarrow t$ employing the first relation in \eqref{key}.  Using the second relation, we see that the resulting double integral is actually sign-definite:
\[
\int_0^T\int_0^T \overline{F(t,0)}k(s-t,0)F(s,0)\,ds\,dt = \left\|\int_0^T e^{-it\p_x^2}\bigl[F(t)\,\delta\bigr]\,dt\right\|_{L^2}^2\geq 0.
\]
Incorporating this information into our earlier computation and using H\"older's inequality, we deduce that
\begin{align*}
\int_0^T|\psi(t,0)|^4\,dt &\leq \Re\int_0^T \overline{F(t,0)}\bigl(e^{it\p_x^2}\psi_0\bigr)(0)\,dt \\
	&\leq \left[\int_0^T |\psi(t,0)|^4\,dt\right]^{\frac34}\|e^{it\p_x^2}\psi_0\|_{L_{[0,\infty)}^4C_x}. 
\end{align*}
In this way, we obtain the key bound forward in time:
\[
\int_0^T|\psi(t,0)|^4\,dt\leq \|e^{it\p_x^2}\psi_0\|_{L_{[0,\infty)}^4C_x}^4.
\]
Sending $T\nearrow\sup I$ and repeating this argument backward in time yields~\eqref{estimate}.
\end{proof}

Theorem \ref{t:point GWP} is a straightforward corollary of Theorem~\ref{t:LWP} and Proposition~\ref{p:identity}:

\begin{proof}[Proof of Theorem~\ref{t:point GWP}]
Combining the Strichartz estimate \eqref{linear Strichartz} and the inequality \eqref{estimate}, we see that the strong solution \(\psi\) of \eqref{point NLS} satisfies \eqref{scattering criterion} on the maximal interval of existence.   By Theorem~\ref{t:LWP} and time reversal symmetry, we deduce that the solution is global and that it scatters both backward and forward in time.
\end{proof}

%%%%%%%%%%%%%%%%%%%%%%%%%%%%%%%%%%%%%%%%%%%%%%%%%%%%%%%
\section{Global solutions and scattering for concentrated nonlinearities}\label{S:4}
%%%%%%%%%%%%%%%%%%%%%%%%%%%%%%%%%%%%%%%%%%%%%%%%%%%%%%%

In this section we prove Theorem~\ref{t:perturbations}. If it were true that \(g_\eps \to \delta\) in \(\cM\), then this result would follow from a simple refinement of Theorem~\ref{t:LWP}. Unfortunately, this convergence only holds in a weak sense.

Our starting point is to obtain strong convergence for one of the forcing terms in the difference between $\psi^\eps$ and $\psi$.  This requires the introduction of appropriate localizations in frequency or space:
\begin{lem}\label{L:4.1}
Let \(\psi\in C^{}_tL_x^2\cap L_t^4C_x^{}\) be the unique global solution to \eqref{point NLS} with initial data \(\psi(0) \in L^2(\R)\). Then, for all dyadic \(N\geq 1\), we have
\begin{align}
\lim_{\eps\to0}\left\|P_{\leq N}\int_{0}^t e^{i(t-s)\p_x^2}\Bigl[|\psi(s)|^2\psi(s)\bigl[g_\eps - \delta\bigr]\Bigr]\,ds\right\|_{C^{}_tL_x^2} &= 0,\label{limit CL2}\\
\lim_{\eps\to0}\left\| \int_{0}^t e^{i(t-s)\p_x^2}\Bigl[|\psi(s)|^2\psi(s)\bigl[g_\eps - \delta\bigr]\Bigr]\,ds\right\|_{L_t^4L^4_{g_\eps}} &= 0.\label{limit L4C}
\end{align}
Here $P_{\leq N}$ denotes the Littlewood--Paley projection to frequencies not exceeding $N$.
\end{lem}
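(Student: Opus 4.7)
Write $F(s, x) := |\psi(s, x)|^2 \psi(s, x)$; by Theorem~\ref{t:point GWP}, $\psi \in L^4_t C_x$, so $F(s, \cdot)$ is continuous and $\|F(s)\|_{L^\infty_x} = \|\psi(s)\|_{L^\infty_x}^3 \in L^{4/3}_t$. Denote $F_\eps(s) := F(s)(g_\eps - \delta)$ and $u_\eps(t) := \int_0^t e^{i(t-s)\p_x^2} F_\eps(s)\,ds$. The core difficulty is that $g_\eps - \delta$ does not converge to zero in total variation, only weakly-$*$ against $C^0$: a direct Strichartz bound on $\|F_\eps\|_{L^{4/3}_t \cM}$ yields only uniform boundedness of $u_\eps$, not smallness. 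The resolution in both parts is to exploit additional smoothing or structural cancellation.

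\emph{Proof of \eqref{limit CL2}.} The projection $P_{\leq N}$ is convolution with a Schwartz kernel $K_N \in L^1_x$, which upgrades weak convergence to strong. Using $\int g_\eps = 1$, I decompose
\begin{equation*}
P_{\leq N} F_\eps(s) = F(s, 0)\bigl(K_N * g_\eps - K_N\bigr) + K_N * \bigl([F(s) - F(s, 0)] g_\eps\bigr).
\end{equation*}
For each fixed $s$, the first term tends to $0$ in $L^1_x$ by the $L^1$-continuity of translation applied to $K_N * g_\eps - K_N = \int [K_N(\cdot - \eps z) - K_N]\,g(z)\,dz$. The second has $L^1_x$-norm bounded by $\|K_N\|_{L^1} \int |F(s, \eps z) - F(s, 0)||g(z)|\,dz$, which vanishes by continuity of $F(s, \cdot)$ at $0$ and dominated convergence. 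The uniform bound $\|P_{\leq N} F_\eps(s)\|_{L^1_x} \lesssim_N \|\psi(s)\|_{L^\infty}^3 \in L^{4/3}_t$ provides the dominator for a further dominated convergence in time, giving $\|P_{\leq N} F_\eps\|_{L^{4/3}_t L^1_x} \to 0$. The inhomogeneous Strichartz estimate dual to the admissible pair $(4, \infty)$ then yields
\begin{equation*}
\|P_{\leq N} u_\eps\|_{C_t L^2_x \cap L^4_t L^\infty_x} \lesssim \|P_{\leq N} F_\eps\|_{L^{4/3}_t L^1_x} \to 0,
\end{equation*}
proving \eqref{limit CL2}.

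\emph{Proof of \eqref{limit L4C}.} I split $u_\eps = P_{\leq N} u_\eps + P_{>N} u_\eps$. The low-frequency contribution follows from the previous step via \eqref{trivial/useful}: $\|P_{\leq N} u_\eps\|_{L^4_t L^4_{g_\eps}} \leq \|g\|_{L^1}^{1/4}\|P_{\leq N} u_\eps\|_{L^4_t L^\infty_x} \to 0$ for each fixed $N$. The main obstacle is showing that $\|P_{>N} u_\eps\|_{L^4_t L^4_{g_\eps}}$ tends to $0$ uniformly in $\eps$ as $N \to \infty$; again, na\"ive Strichartz is insufficient. My plan is to exploit the further decomposition
\begin{equation*}
F_\eps(s) = \bigl[F(s) g_\eps - \beta_\eps(s)\delta\bigr] + \bigl[\beta_\eps(s) - F(s, 0)\bigr]\delta, \qquad \beta_\eps(s) := \int F(s, y) g_\eps(y)\,dy.
\end{equation*}
The second summand is a $\delta$-forcing whose coefficient $\beta_\eps - F(s, 0) = \int [F(s, \eps z) - F(s, 0)] g(z)\,dz$ tends to $0$ in $L^{4/3}_t$ by dominated convergence, so its Duhamel contribution vanishes in $L^4_t L^4_{g_\eps}$ by direct Strichartz. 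The first summand $\nu_\eps(s) := F(s) g_\eps - \beta_\eps(s)\delta$ has \emph{zero total mass}, hence $\hat\nu_\eps(s, 0) = 0$ and $|\hat\nu_\eps(s, \xi)| \lesssim \|\psi(s)\|_\infty^3 \int \min(1, \eps|z\xi|)|g(z)|\,dz$; this Fourier-side modulus of continuity at the origin provides the mechanism for high-frequency cancellation, allowing one to close the $P_{>N}$ estimate. The technical heart is implementing this bound uniformly in $\eps$ given only $g \in L^1$ (no moment assumption), which I would handle by first approximating $g$ by compactly supported functions in $L^1$ and controlling the residual with the crude Strichartz bound.
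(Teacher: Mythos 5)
Your proof of \eqref{limit CL2} is correct and close to the paper's argument, with a mild stylistic difference: you rely on $L^1$-continuity of translations acting on the Littlewood--Paley kernel $K_N$, whereas the paper first reduces to compactly supported $g$ and quotes the quantitative bound $\sup_{|z|\leq\eps R}\int|K_N(y-z)-K_N(y)|\,dy\lesssim \eps N R$, then removes the support restriction at the end. Your route is a shade cleaner for this half.

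For \eqref{limit L4C}, there is a genuine gap. Your strategy splits $u_\eps = P_{\leq N}u_\eps + P_{>N}u_\eps$ and requires $\lim_{N\to\infty}\sup_{\eps}\|P_{>N}u_\eps\|_{L^4_tL^4_{g_\eps}}=0$, i.e., frequency tightness of $u_\eps$ uniformly in $\eps$. This is exactly what fails: the forcing $\nu_\eps(s)=F(s)g_\eps-\beta_\eps(s)\delta$ is concentrated on a spatial scale $\eps$, hence spread over frequencies $|\xi|\lesssim 1/\eps$, with $|\hat\nu_\eps(s,\xi)|$ having no decay at all for $|\xi|\gtrsim 1/\eps$. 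The zero-mass cancellation $\hat\nu_\eps(s,0)=0$ and the modulus bound $\lesssim\min(1,\eps|\xi|)$ help in the regime $|\xi|\lesssim 1/\eps$, which is a gain near the \emph{origin}, not at high frequencies; it cannot make the $P_{>N}$ piece small uniformly as $\eps\downarrow0$. You yourself flag this as the ``technical heart,'' but the mechanism you name does not supply what is needed. The paper avoids frequency splitting entirely for this half: it uses that the $L^4_{g_\eps}$-norm only tests $x$ in the support of $g_\eps$, so (for $\supp g\subseteq[-R,R]$) one may restrict to $|x|\leq R\eps\leq R$, where the kernel difference $k(t-s,x-y)-k(t-s,x)$ with $|y|\leq R\eps$ tends to zero pointwise by continuity of $z\mapsto k(\tau,z)$ for $\tau\neq 0$. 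A dominating function of the form $H(t,s)=\bbo_{\{0<s<t\}\cup\{t<s<0\}}|t-s|^{-1/2}|F(s,0)|\|g\|_{L^1}$, integrable in $s$ by the Hardy--Littlewood convolution inequality, then lets dominated convergence (twice, in $s$ and then in $t$) close the argument. This physical-space argument, together with a truncation in $R$ to pass from compactly supported $g$ to general $g\in L^1$, is what your sketch is missing.
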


\begin{proof}
Let \(k(t,x)\) denote the fundamental solution of the linear Schr\"odinger equation as in \eqref{fundamental solution} and let \(F(t,x) := |\psi(t,x)|^2\psi(t,x)\) as in \eqref{F(t,x)}.  As \(\int_\R g_\eps\,dx = 1\), we may write
\begin{align*}
&\int_{0}^t e^{i(t-s)\p_x^2}\Bigl[|\psi(s)|^2\psi(s)\bigl[g_\eps - \delta\bigr]\Bigr]\,ds\\
&\qquad = \underbrace{\int_{0}^t\int_\R k(t-s,x-y)\bigl[F(s,y) - F(s,0)\bigr]g_\eps(y)\,dy\,ds}_{=:I_\eps(t,x)}\\
&\qquad\quad + \underbrace{\int_{0}^t\int_\R \bigl[ k(t-s,x-y) - k(t-s,x) \bigr] F(s,0) g_\eps(y)\,dy\,ds}_{=:\II_\eps(t,x)}.
\end{align*}

We begin with the contribution of \(I_\eps(t,x)\) to both \eqref{limit CL2} and \eqref{limit L4C}.  For almost every $s$, $F(s,y)$ is a bounded and continuous function of $y$.  For such $s$,
\[
\bigl\| \bigl[F(s,y) - F(s,0)\bigr]g_\eps(y)\bigr\|_{\cM} = \int_\R \bigl|F(s,\eps y') - F(s,0)\bigr| \bigl|g(y')\bigr|\,dy' \longrightarrow 0
\]
as $\eps\to0$.  Moreover, we have the (uniform in $\eps$) domination 
\[
\bigl\| \bigl[F(s,y) - F(s,0)\bigr]g_\eps(y)\bigr\|_{\cM} \leq 2 \| g\|_{L^1} \| \psi(s)\|_{C_x}^3 \in L^{4/3}_s(\R).
\]
Thus, by the Dominated Convergence Theorem, we deduce that
\[
\bigl\| \bigl[F(s,y) - F(s,0)\bigr]g_\eps(y)\bigr\|_{L^{4/3}_s\cM}  \longrightarrow 0.
\]
The inequality \eqref{linear Strichartz} then demonstrates that \(I_\eps(t,x)\) does indeed make an acceptable contribution to both \eqref{limit CL2} and \eqref{limit L4C}.

The contributions of \(\II_\eps(t,x)\) are rather more subtle and we will treat \eqref{limit CL2} and \eqref{limit L4C} separately.  Initially, we impose the restriction that $g$ has compact support, concretely, $\supp(g)\subseteq[-R,R]$.  This restriction will be removed later. 

Let \(K_N(x) = NK(Nx)\) be the kernel of the Littlewood--Paley projection \(P_{\leq N}\).  Here \(K\in \Schwartz(\R)\) and so using the Fundamental Theorem of Calculus, we see that
\begin{align}\label{Lip LP}
 \sup_{|z|\leq \eps R} \ \int_\R \bigl|K_N(y-z) - K_N(y) \bigr| \,dy & \lesssim \eps N R.
\end{align}
 
After a change of variables, can write the low-frequency portion of $\II_\eps$ as follows:
\begin{align*}
&P_{\leq N}\II_\eps(t,x) = \int_\R\int_{0}^t\int_\R k(t-s,x-y) \bigl[K_N(y-z) - K_N(y)\bigr]F(s,0)g_\eps(z)\,dz\,ds\,dy.
\end{align*}
Recalling the Strichartz estimate \eqref{linear Strichartz}, \eqref{Lip LP}, and the support restriction on $g$, we can immediately deduce
\begin{align*}
\|P_{\leq N}\II_\eps\|_{C^{}_tL_x^2}\lesssim \eps N R \|\psi\|_{L_t^4C_x^{ }}^3\|g\|_{L^1} \quad\text{uniformly for $0<\eps\leq 1$.}
\end{align*}
As this converges to zero as $\eps\to0$, we have now completed the proof of \eqref{limit CL2} for compactly supported \(g\).

To bound the contribution of \(\II_\eps\) to \eqref{limit L4C}, we introduce
\begin{align*}
h_\eps(t,s,x) &= \bbo_{\{0<s<t\}\cup\{t<s<0\}}\int_\R\bigl[k(t-s,x-y) - k(t-s,x)\bigr]F(s,0)g_\eps(y)\,dy,\\
H(t,s) &= \bbo_{\{0<s<t\}\cup\{t<s<0\}}\tfrac 1{\sqrt{\pi|t-s|}}|F(s,0)|\|g\|_{L^1}.
\end{align*}
By \eqref{fundamental solution}, we have 
\begin{equation}\label{dominating}
\sup_{|x|\leq R}|h_\eps(t,s,x)|\leq H(t,s) \quad\text{for all \(t,s\in \R\) and all \(0<\eps\leq 1\).}
\end{equation}
Further, by the Hardy--Littlewood convolution inequality, we have
\begin{equation}\label{domination}
\left\| \II_\eps \right\|_{L_t^4 L^\infty_x} \leq \left\|\int_\R H(t,s)\,ds\right\|_{L_t^4}\lesssim \|\psi\|_{L_t^4C_x}^3\|g\|_{L^1},
\end{equation}
which shows that \(s\mapsto H(t,s)\) is integrable for a.e. \(t\in \R\).

As \(F\in L_t^{4/3}C_x^{ }\), so $|F(s,0)|<\infty$ for almost every $s\in\R$.  For any such $s\neq t$,
\[
	\sup_{|x|\leq R} \  \sup_{|y|\leq  R\eps}\ |k(t-s,x-y) - k(t-s,x)| |F(s,0)|\|g\|_{L^1} \to0 \quad \text{as}\quad  \eps\to0,
\]
due to the continuity of $z\mapsto k(\tau,z)$ for $\tau\neq 0$.  In this way, we see that
\[
	\sup_{|x|\leq R}\ |h_\eps(t,s,x)| \to 0 \qtq{as} \eps\to0 \quad\text{for a.e. $t,s\in\R$.}
\]

Applying the Dominated Convergence Theorem with the dominating function \(s\mapsto H(t,s)\) discussed in connection with \eqref{domination},  we deduce that
\begin{equation}\label{lim 1}
\lim_{\eps \to 0} \ \sup_{|x|\leq R}\  | \II_\eps(t,x)| \leq  \lim_{\eps \to 0}\int_\R \sup_{|x|\leq R}\  | h_\eps(t,s,x) | \,ds = 0 \quad\text{for a.e. \(t\in \R\)}
\end{equation}
and consequently,
\begin{equation}\label{lim 1'}
\lim_{\eps \to 0} \ \bigl\| \bbo_{\{|x|\leq R\}} \II_\eps(t,x) \bigr\|_{L^4_{g_\eps}}  = 0 \quad\text{for a.e. \(t\in \R\)}.
\end{equation}
Finally, using \eqref{domination}, the Dominated Convergence Theorem completes the proof of \eqref{limit L4C}, at least for compactly supported \(g\).

To finish, we remove the restriction that $g$ be of compact support. Given \(R>0\), we set \(A(R) := \int_{-R}^R g(x)\,dx\).  By hypothesis $\int_\R g =1$ and so \(A(R)\to 1\) as \(R\to\infty\). For $R$ sufficiently large, \(A(R)>0\) and we may decompose
\begin{align*}
&\int_0^t e^{i(t-s)\p_x^2}\Bigl[|\psi(s)|^2\psi(s)\bigl[g_\eps - \delta\bigr]\Bigr]\,ds\\
&\qquad = A(R)\int_0^t e^{i(t-s)\p_x^2}\Bigl[|\psi(s)|^2\psi(s)\bigl[\tfrac1{A(R)}g_\eps\bbo_{\{|x|\leq R\eps\}} - \delta\bigr]\Bigr]\,ds\\
&\qquad \quad + \int_0^t e^{i(t-s)\p_x^2}\Bigl[|\psi(s)|^2\psi(s)\,g_\eps\bbo_{\{|x|>R\eps\}}\Bigr]\,ds\\
&\qquad\quad + \bigl[1 - A(R)\bigr]\int_0^t e^{i(t-s)\p_x^2}\Bigl[|\psi(s)|^2\psi(s)\,\delta\Bigr]\,ds.
\end{align*}
Our earlier argument demonstrates that for any choice of $R$, the contribution of the first summand to \eqref{limit CL2} and \eqref{limit L4C} converges to zero as $\eps\to0$. For the second and third summands, we may apply \eqref{linear Strichartz} to obtain the \(\eps\)-independent estimates
\begin{align*}
\left\|\int_0^t e^{i(t-s)\p_x^2}\Bigl[|\psi(s)|^2\psi(s)\,g_\eps\bbo_{\{|x|>R\eps\}}\Bigr]\,ds\right\|_{C^{}_tL_x^2\cap L_t^4C_x}
	&\lesssim \|\psi\|_{L_t^4C_x}^3\|g\|_{L^1(|x|>R)}\\
\left\|\bigl[1 - A(R)\bigr]\int_0^t e^{i(t-s)\p_x^2}\Bigl[|\psi(s)|^2\psi(s)\,\delta\Bigr]\,ds\right\|_{C^{}_tL_x^2\cap L_t^4C_x}
	&\lesssim |1 - A(R)| \|\psi\|_{L_t^4C_x}^3.
\end{align*}
Consequently, by first sending \(\eps \to 0\) and then taking \(R\to\infty\), we obtain \eqref{limit CL2} and \eqref{limit L4C} for all \(g\in L^1\).
\end{proof}

\begin{lem}\label{L:>N}
Let $\psi(t)$ be the unique global solution to \eqref{point NLS} with initial data \(\psi(0) \in L^2(\R)\).  Then
\begin{equation}\label{global equicontinuity}
\lim_{N\to\infty}\|P_{>N}\psi\|_{C^{}_tL_x^2} = 0.
\end{equation}
\end{lem}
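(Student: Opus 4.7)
The plan is to reduce the supremum over all $t\in\R$ to a bounded time interval via the scattering statement in Theorem \ref{t:point GWP}, and to handle the bounded part by exploiting compactness of the orbit in $L^2(\R)$.

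First, given $\eta>0$, I would use Theorem \ref{t:point GWP} to produce asymptotic states $\psi_\pm\in L^2(\R)$ and a time $T=T(\eta)>0$ so that $\|\psi(t)-e^{it\p_x^2}\psi_\pm\|_{L^2}<\eta$ whenever $\pm t\geq T$. Because $P_{>N}$ commutes with $e^{it\p_x^2}$ and both operators are unitary on $L^2$, for such $t$ the triangle inequality yields
\[
\|P_{>N}\psi(t)\|_{L^2} \leq \eta + \|P_{>N}\psi_\pm\|_{L^2},
\]
and the second term tends to zero as $N\to\infty$ simply because $\psi_\pm\in L^2(\R)$ (dominated convergence on the Fourier side).

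For the bounded range $|t|\leq T$, I would use that $\psi\in C_tL^2_x$ (from Theorem \ref{t:LWP}) to conclude that the image $K:=\psi([-T,T])\subseteq L^2(\R)$ is compact, as the continuous image of a compact interval. I would then invoke the standard principle that compact subsets of $L^2$ are equicontinuous in frequency: a finite $L^2$-net $\{f_1,\dots,f_M\}$ of $K$ of mesh $\eta/2$ reduces matters to verifying $\|P_{>N}f_i\|_{L^2}<\eta/2$ for finitely many $f_i$, which holds for $N$ sufficiently large. Combining this with the bound on the tails and letting $\eta\to0$ would give $\lim_{N\to\infty}\|P_{>N}\psi\|_{C_tL^2}=0$.

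The main obstacle - and the reason a direct Strichartz-based argument fails - is that the measure $|\psi(s)|^2\psi(s)\,\delta$ has a Fourier transform that is independent of the frequency variable, so passing $P_{>N}$ through the Duhamel integral produces estimates that are scale-invariant in $N$ (essentially a logarithmic divergence in the natural H\"older--Hausdorff--Young bound). This obstruction forces the argument to leverage the qualitative scattering statement rather than a purely quantitative Strichartz computation, converting the long-time part into a linear flow of an $L^2$ state and reducing the compact-time part to a soft compactness argument.
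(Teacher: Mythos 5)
Your argument is correct and follows essentially the same path as the paper: both proofs combine the scattering statement with the fact that precompact subsets of $L^2$ are tight in frequency. The paper packages this slightly more compactly by noting that $t\mapsto e^{-it\partial_x^2}\psi(t)$ extends continuously to the two-point compactification $[-\infty,\infty]$, so a single compactness argument covers all times at once, whereas you split the time axis into a bounded piece and two tails; the only cosmetic imprecision in your writeup is calling $P_{>N}$ unitary, when what you need (and have) is merely uniform boundedness on $L^2$.
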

\begin{proof}
By Theorem~\ref{t:point GWP}, the map \([-\infty,\infty]\ni t\mapsto e^{-it\p_x^2}\psi(t)\in L^2(\R)\) is continuous, where \([-\infty,\infty]\) denotes the \(2\)-point compactification of \(\R\). It follows that the set \(\{e^{-it\p_x^2}\psi(t):t\in \R\}\) is precompact in \(L^2(\R)\) and therefore tight in frequency. As \(e^{-it\p_x^2}\) does not modify the frequency distribution, the claim \eqref{global equicontinuity} follows.
\end{proof}

We are now in a position to complete the
\begin{proof}[Proof of Theorem~\ref{t:perturbations}]
We will work forward in time; the result then carries over to all times via time-reversal symmetry.

Theorem~\ref{t:point GWP} shows that the solution $\psi(t)$ to \eqref{point NLS} is global and satisfies \eqref{STB:1}.  Thus, given $\eta>0$, which will be chosen shortly, we may partition $[0,\infty)$ into intervals
\[
J_m = [t_{m-1},t_m) \qtq{with}  0 = t_0 < t_1 < \dots < t_M = \infty,
\]
so that 
\begin{equation}\label{big ptn}
\max_{m=1,\dots, M}\|\psi\|_{L_{J_m}^4C_x}\leq \eta \qtq{and} M\leq \tfrac1{\eta^4}\|\psi\|_{L_t^4C_x}^4+1.
\end{equation}

Let $\psi^\eps(t)$ denote the maximal solution to \eqref{g epsilon NLS} with initial data $\psi^\eps(0)$; recall that $\psi^\eps(0)\to \psi(0)$ in $L^2(\R)$.
Unlike $\psi$, we do not know if $\psi^\eps$  will be global.  Nevertheless, Theorem~\ref{t:LWP}\ref{LWP:SC} shows that finite time blowup must be accompanied by infinite spacetime norm.  Consequently, for each $\eps>0$, we may define a `stopping time' $\tau^\eps\in[0,\infty]$ as the maximal time for which
\begin{equation}\label{not stopped'}
\|\psi^\eps - \psi\|_{L_{[0,\tau)}^4 L_{g_\eps}^4}\leq \| g \|_{L^1}^{\frac14} \eta.
\end{equation}

Given a dyadic $N$, $t\in[0,\infty]$, and a spacetime function $u(s,x)$, we define
\begin{equation}\label{err'}
\Err_N^{\,\eps} (u, t):= \|P_{\leq N}u \|_{C_{[0,t)}L_x^2} + \|u\|_{L_{[0,t)}^4L_{g_\eps}^4} .
\end{equation}
Our intention is to bound this with $u=\psi^\eps-\psi$.% and $t=t_m\wedge \tau^\eps$.  

For any $0\leq m < M$ and $0<t\leq t_{m+1}\wedge \tau^\eps$, \eqref{Duhamel} allows us to write 
\begin{align*}
\psi^\eps(t) - \psi(t) &= \underbrace{e^{i t\p_x^2}\bigl[\psi^\eps(0) - \psi(0)\bigr]}_{=:\I}\\
&\quad -\underbrace{i\int_{0}^t e^{i(t-s)\p_x^2}\Bigl[|\psi(s)|^2\psi(s)\bigl[g_\eps - \delta\bigr]\Bigr]\,ds}_{=:\II} \\
&\quad - \underbrace{i \int_{0}^t e^{i(t-s)\p_x^2}\Bigl[\bigl[|\psi^\eps(s)|^2\psi^\eps(s) - |\psi(s)|^2\psi(s)\bigr]g_\eps\Bigr] \bbo_{[0,t_m)}(s) \,ds}_{=:\III_m^{<}} \\
&\quad - \underbrace{i \int_{0}^t e^{i(t-s)\p_x^2}\Bigl[\bigl[|\psi^\eps(s)|^2\psi^\eps(s) - |\psi(s)|^2\psi(s)\bigr]g_\eps\Bigr] \bbo_{[t_m,t_{m+1})}(s) \,ds}_{=:\III_m^{>}}\,.
\end{align*}

By \eqref{linear Strichartz} and our hypothesis on $\psi^\eps(0)$,
\begin{align}\label{I bnd}
\Err_N^{\,\eps} (\I,\infty) \lesssim \|\psi^\eps(0) - \psi(0)\|_{L^2} \to 0 \qtq{as} \eps \to 0.
\end{align}
Combining \eqref{limit CL2} and \eqref{limit L4C}, we obtain
\begin{align}\label{II bnd}
\lim_{\eps \to 0} \ \Err_N^{\,\eps} \bigl( \II, \infty \bigr) =0 \qtq{for each $N\in 2^\N$.}
\end{align}

We now focus on the remaining terms.  For any time interval $J\subseteq [0,\tau^\eps)$, the Strichartz inequality and elementary manipulations yield
\begin{align*}
\biggl\| \int_0^t & e^{i(t-s)\p_x^2} \Bigl[\bigl[|\psi^\eps(s)|^2\psi^\eps(s) - |\psi(s)|^2\psi(s)\bigr]g_\eps\Bigr] \bbo_J(s) \,ds\biggr\|_{{C^{}_tL_x^2\cap L_t^4C_x}} \\
	&\lesssim \|g\|_{L^1}^{\frac14}
	\bigl[\|\psi\|_{L_{J}^4L_{g_\eps}^4}^2 + \|\psi^\eps - \psi\|_{L_{J}^4L_{g_\eps}^4}^2\bigr]\|\psi^\eps - \psi\|_{L_{J}^4L_{g_\eps}^4}.
\end{align*}

Taking $J=[0,t_m)$ and using \eqref{not stopped'}, this yields
\begin{align}\label{III bnd}
\Err_N^{\,\eps} ( \III_m^{<}, t_{m+1}\wedge \tau^\eps)
		\lesssim  \|g\|_{L^1}^{\frac34} \bigl[\|\psi\|_{L_t^4C_x^{ }}^2 + \eta^2\bigr] \Err_N^{\,\eps}( \psi^\eps - \psi, t_{m}\wedge\tau^\eps).
\end{align}
By taking $J=[t_m,t_{m+1})$ and also employing \eqref{big ptn} and \eqref{trivial/useful}, we obtain
\begin{align}\label{III' bnd}
\Err_N^{\,\eps} ( \III_m^{>}, t_{m+1}\wedge \tau^\eps)  &\lesssim  \|g\|_{L^1}^{\frac34} \eta^2 \Err_N^{\,\eps}( \psi^\eps - \psi, t_{m+1}\wedge\tau^\eps) .
\end{align}

Putting all the pieces together, we deduce that
\begin{align*}%\label{731}
\Err_N^{\,\eps} ( \psi^\eps - \psi, t_{m+1}\wedge\tau^\eps)  &\lesssim \Err_N^{\,\eps} (\I,\infty) + \Err_N^{\,\eps} (\II,\infty)\\
	&\quad +  \|g\|_{L^1}^{\frac34} \bigl[\|\psi\|_{L_t^4C_x^{ }}^2 + \eta^2\bigr] \Err_N^{\,\eps}( \psi^\eps - \psi, t_{m}\wedge\tau^\eps)\\
	&\quad + \|g\|_{L^1}^{\frac34} \eta^2 \Err_N^{\,\eps}( \psi^\eps - \psi, t_{m+1}\wedge\tau^\eps) .
\end{align*}
By choosing $\eta$ sufficiently small, depending only on $g$, we may absorb the last term into the left-hand side.  This sets the stage for a simple induction on $0\leq m < M$.  For the base step, $m=0$, we employ
$$
\Err_N^{\,\eps} ( \psi^\eps - \psi, 0\wedge \tau^\eps)\leq \|\psi^\eps(0) - \psi(0)\|_{L^2}.
$$
Proceeding inductively, we deduce 
\begin{align*}%\label{735}
\Err_N^{\,\eps} ( \psi^\eps - \psi, \tau^\eps)  &\leq C\bigl( \|\psi\|_{L_t^4C_x^{ }}, \|g\|_{L^1} \bigr)
	\bigl[ \|\psi^\eps(0) - \psi(0)\|_{L^2} + \Err_N^{\,\eps} (\I,\infty) \\
	&\qquad\qquad\qquad\qquad \qquad \qquad +\Err_N^{\,\eps} (\II,\infty)\bigr].
\end{align*}
Together with \eqref{I bnd} and \eqref{II bnd}, this yields
\begin{equation}\label{err to zero}
\lim_{\eps\to 0} \ \Err_N^{\,\eps} (\psi^\eps - \psi, \tau^\eps) = 0 \qtq{for each} N\in 2^\N .
\end{equation}
In particular,
$$
\|\psi^\eps - \psi\|_{L_{[0,\tau^\eps)}^4L_{g_\eps}^4} \to 0   \qtq{as}  \eps \to 0.
$$
Comparing this with \eqref{not stopped'}, we see that if $\eps$ is sufficiently small, a continuity argument yields $\tau^\eps=\infty$.  Therefore, the solution $\psi^\eps$ is global, satisfies \eqref{scattering criterion}, and so scatters.

It remains only to verify \eqref{g converg}.  The defect of \eqref{err to zero} in this regard is that it does not cover high frequencies.  Using conservation of mass, we find
\begin{align*}
\|\psi^\eps - \psi\|_{C^{}_tL_x^2}^2 &= \sup_{t\in \R} \Bigl[\|\psi^\eps(t)\|_{L^2}^2 - \|\psi(t)\|_{L^2}^2 - 2\Re\< \psi^\eps(t) - \psi(t),\psi(t)\>\Bigr]\\
&\leq \|\psi^\eps(0)\|_{L^2}^2 - \|\psi(0)\|_{L^2}^2 + 2\|P_{\leq N}(\psi^\eps - \psi)\|_{C^{}_tL_x^2}\|\psi(0)\|_{L^2}\\
&\quad + 2\bigl[\|\psi^\eps(0)\|_{L^2} + \|\psi(0)\|_{L^2}\bigr]\|P_{>N}\psi\|_{C^{}_tL_x^2}.
\end{align*}
Lemma~\ref{L:>N} shows that the final term can be made small by choosing $N$ large.  Using $\psi^\eps(0)\to\psi(0)$ and \eqref{err to zero}, we see that the remaining terms converge to zero as $\eps\to 0$, thereby completing the proof of \eqref{g converg}.
\end{proof}

\bibliographystyle{habbrv}
\bibliography{refs}

\end{document}